\newcommand{\Adeles}{\mathbf{A}}
\newcommand{\adeles}{\mathbf{A}}
\newcommand{\Q}{\mathbb{Q}}
\newcommand{\Z}{\mathbb{Z}}
\newcommand{\G}{\mathbb{G}}
\newcommand{\bbP}{\mathbb{P}}
\newcommand{\calO}{\mathcal{O}}
\newcommand{\cp}{{\cal P}}
\newcommand{\calP}{{\cal P}}
\newcommand{\fp}{\mathfrak p}
\newcommand{\br}{\operatorname{Br}}
\newcommand{\Br}{\br}
\newcommand{\ev}{\operatorname{ev}}
\newcommand{\inv}{\operatorname{inv}}
\newcommand{\gal}{\operatorname{Gal}}
\newcommand{\pic}{\operatorname{Pic}}
\newcommand{\norm}{\operatorname{Norm}}
\newcommand{\frob}{\operatorname{Frob}}
\renewcommand{\div}{\operatorname{div}}
\newcommand{\Div}{\operatorname{Div}}
\newcommand{\val}{\operatorname{v}}
\newcommand{\Xbar}{\overline{X}}
\newcommand{\thetabar}{\overline{\theta}}
\newcommand{\calA}{\mathcal{A}}
\newcommand{\et}{\acute{e}t}
\newcommand{\LHprod}{\prod_{i=0}^2 (x+\phi_iz+\psi_i w)}
\newcommand{\LHS}{\LHprod}
\newcommand{\RHS}{dy(x+\theta y)(x+\thetabar y)}
\newcommand{\CT}{Colliot-Th\'el\`ene}
\newtheorem{thm}{Theorem}[section]
\newtheorem{cor}[thm]{Corollary}
\newtheorem{lem}[thm]{Lemma}
\theoremstyle{definition}
\newtheorem*{defn}{Definition}
\newtheorem*{ex}{Example}
\newtheorem*{rmk}{Remark}
\begin{document}
\title{On Birch and Swinnerton-Dyer's cubic surfaces}
\author{Mckenzie West}
\maketitle
\abstract{
	\noindent In a 1975 paper of Birch and Swinnerton-Dyer, a number of explicit norm form cubic surfaces are shown to fail the Hasse Principle.  They make a correspondence between this failure and the Brauer--Manin obstruction, recently discovered by Manin.
	We generalize their work, making use of modern computer algebra software to show that a larger set of cubic surfaces have a Brauer--Manin obstruction to the Hasse principle, thus verifying the \CT--Sansuc conjecture for infinitely many cubic surfaces.
}

\section{Introduction}
Suppose $X$ is a smooth projective variety over a number field $k$.  Denote by $X(k)$ and $X(\adeles_k)$ the rational and ad\'elic points of $X$, respectively.  The natural inclusion of $k$ into $\adeles_k$ gives $X(k)\neq\emptyset\Rightarrow X(\adeles_k)\neq\emptyset$.  The variety X satisfies the \emph{Hasse principle} if the converse is true, $X(\adeles_k)\neq\emptyset\Rightarrow X(k)\neq\emptyset$.  There are many examples of varieties which do not satisfy the Hasse Principle; Cassels and Guy, in \cite{CG}, provided one of the original counterexamples,
	\begin{equation}\label{CGcubic}
		5x^3+ 12y^3+ 9z^3+ 10w^3=0.
	\end{equation}

As a cohomological generalization of quadratic reciprocity, Manin constructs, in \cite{manin1}, the \emph{Brauer Set} $X(\adeles_k)^{\br{} }$ which lies between $X(k)$ and $X(\adeles_k)$.  We say that $X$ has a \emph{Brauer--Manin obstruction to the Hasse Principle} if $X(\adeles_k)^{\br{}}=\emptyset$ while $X(\adeles_k)\neq\emptyset$.
Around this time, Birch and Swinnerton-Dyer, in \cite{BSD}, considered counterexamples to the Hasse Principle for rational surfaces via very direct arguments. They comment that Manin's method should apply and provide a brief sketch to this effect.  We will examine the cubic surfaces constructed by Birch and Swinnerton-Dyer:
	\begin{quote}
  		Let $K_0/k$ be a non-abelian cubic extension, and $L/k$ its Galois closure.  Suppose $K/k$ is the unique quadratic extension which lies in $L$.  We will assume that $(1,\phi,\psi)$ are any linearly independent generators for $K_0/k$, and $K/k$ is generated by $\theta$.  Then consider the diophantine equation given by
 			 \begin{equation}\label{BSDeqn}m\norm_{L/K}(ax+by+\phi z+\psi w) = (cx+dy)\norm_{K/k}(x+\theta y),\end{equation}
 		where the $m,a,b,c,d$ are suitably chosen $k$-rational integers.  
	\end{quote}

Birch and Swinnerton-Dyer show that as long as $a,b,c,d$ have ``certain'' divisibility properties, the projective cubic surface in $\bbP_k^3$ defined by the equation \eqref{BSDeqn} do not satisfy the Hasse Principle.  Their examples of failure are very specific, choosing values for almost all of the constants. This is done by considering a rational solution $[x\colon y\colon z\colon w]$ and examining the possible factorizations of the ideal $(x+\theta y)$ in $\calO_K$.  They find two possible reasons the Hasse Principle may fail and give an example computation of the Brauer--Manin obstruction for each.  This paper re-examines the BSD surfaces, generalizing their work using the fact that we can compute the lines of a surface using computer algebra software.

\begin{thm}\label{thm:main}
	Suppose $k$ is a number field and $X$ is as in \eqref{BSDeqn}.  
	\begin{enumerate}[ref={\thethm.\arabic*}]
		\item\label{thm:main-1} 	Let $L'/k$ be the minimal extension over which the 27 lines, or exceptional curves, on $\Xbar$ are defined.  If $9\mid [L'\colon k]$ then $\br X/\br k\simeq \Z/3\Z$ otherwise $\br X/\br k$ is trivial.
		\item\label{thm:main-2}  In the case that $\Br X/\Br k$ is trivial, we have that $X$ satisfies the Hasse Principle.
	\end{enumerate}
\end{thm}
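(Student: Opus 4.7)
The plan splits by the two parts of the theorem.

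For part \ref{thm:main-1}, I would follow the standard Hochschild--Serre computation of the algebraic Brauer group of a geometrically rational surface. Applying the spectral sequence to $\Xbar\to X$, combined with $\Br\Xbar = 0$ and $\coho{3}(\kbar,\G_m)=0$ for $k$ a number field, yields the identification
$$\Br X/\Br_0 X \isoto \coho{1}\bigl(\Gal(\kbar/k),\; \pic \Xbar\bigr),$$
where $\Br_0 X$ denotes the image of $\Br k$. Since $\pic \Xbar$ is generated (together with a hyperplane class) by the 27 lines, the Galois action factors through $G := \Gal(L'/k)\hookrightarrow W(E_6)$, and the calculation reduces to $\coho{1}(G,\pic \Xbar)$. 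The plan is then to combine the Manin--Swinnerton-Dyer classification of $\coho{1}(H,\pic \Xbar)$ for subgroups $H\le W(E_6)$ (its only possible values being $0$, $\Z/2$, $(\Z/2)^2$, $\Z/3$, and $(\Z/3)^2$) with an explicit computer-algebra computation of the 27 lines as functions of the BSD parameters $\phi,\psi,\theta,a,b,c,d,m$. Identifying which conjugacy class of subgroup of $W(E_6)$ the image of $G$ falls into should show that for surfaces of the form \eqref{BSDeqn} one obtains $\Z/3\Z$ precisely when $9\mid[L':k]$, and $0$ otherwise.

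For part \ref{thm:main-2}, suppose $\Br X/\Br k$ is trivial and $X(\adeles_k)\neq\emptyset$; the goal is to produce a $k$-rational point on $X$. The plan is to mimic Birch and Swinnerton-Dyer's ideal-theoretic argument. Given an adelic solution, one studies the prime factorization of $(x+\theta y)\calO_K$ in tandem with that of $\norm_{L/K}(ax+by+\phi z+\psi w)$; in BSD's counterexamples, the failure of the Hasse principle came from a specific 3-adic congruence mismatch between these two factorizations, which I would interpret as arising exactly from the $\Z/3\Z$ class detected in part \ref{thm:main-1}. Under the hypothesis of part \ref{thm:main-2} this class is absent, so no such mismatch can occur, and one should be able to splice local solutions into a global one by the Hasse norm theorem applied to the cyclic cubic extension $L/K$.

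The main obstacle will be part \ref{thm:main-2}: the \CT--Sansuc conjecture (that Brauer--Manin is the only obstruction to the Hasse principle for rational surfaces) remains open for cubic surfaces in general, so it cannot simply be quoted. The proof must instead exploit the rigid norm-form shape of \eqref{BSDeqn} together with the divisibility hypotheses on the coefficients $a,b,c,d,m$, turning the algebraic statement ``$9\nmid[L':k]$'' into a concrete compatibility of local ideal factorizations from which one builds a rational point by hand.
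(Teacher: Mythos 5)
Your plan for part \ref{thm:main-1} is essentially the paper's: the Hochschild--Serre identification $\Br X/\Br k\simeq\coho{1}(G_k,\pic\Xbar)$, an explicit computation of the 27 lines in terms of the BSD parameters, and then the Swinnerton-Dyer/Corn classification of the possible values of $\coho{1}$. The paper carries this out by writing down the lines $L_{i,j}$ and $L_{(i,j,k),n}$ and exhibiting a specific triple-nine whose columns are permuted exactly when $3\mid[L'\colon L]$, which is what detects the $\Z/3\Z$; this is a concrete instance of your ``identify the conjugacy class in $W(E_6)$'' step, so no issue there.

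Your plan for part \ref{thm:main-2}, however, misses the key idea and proposes a route that you yourself flag as the main obstacle. You suggest an ideal-theoretic splicing argument in the style of Birch--Swinnerton-Dyer, invoking the Hasse norm theorem for $L/K$, but you give no mechanism that actually turns the absence of the $\Z/3$ class into a global point, and indeed there is no reason to expect such a direct norm-form argument to close. The paper's proof is entirely geometric and much cleaner: when $\Br X/\Br k$ is trivial, the analysis in part \ref{thm:main-1} shows the Galois action fixes the columns of the second and third nines in the triple-nine \eqref{triplenine}, so one can choose two disjoint $G_k$-stable columns and obtain a Galois-stable set of six pairwise skew exceptional curves. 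Blowing these down produces a birational morphism $X\to X'$ defined over $k$ to a degree $9$ del Pezzo surface. Degree $9$ del Pezzo surfaces (Severi--Brauer surfaces) satisfy the Hasse principle, and the Lang--Nishimura lemma then transports a $k$-point on $X'$ back to one on $X$. This is the missing step: part \ref{thm:main-2} is not a delicate arithmetic computation but a corollary of the line-configuration analysis in part \ref{thm:main-1} combined with blow-down and Lang--Nishimura. You should replace the proposed norm-splicing argument with this.
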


Furthermore, this classification provides a quick method for computing the existence of Brauer--Manin obstructions for these surfaces, as in the following result.
\begin{thm}\label{thm:HasObstruction}
  Take $k$ a number field, $L/K$ unramified, and the $\phi_i$ and $\psi_i$ to be integral units in $K_0$ with the minimal polynomial of $\psi_i/\phi_i$ being separable modulo $\fp\mid 3\calO_k$.  Suppose $p\calO_k$ is a prime for which $p\calO_L=\calP_1\calP_2$ such that $p\|\theta\thetabar$ and the $\calP_i$ are primes in $\calO_L$.  Then the variety defined by 
  \[\LHS = py(x+\theta y)(x+\thetabar y)\]
  has a Brauer--Manin obstruction to the Hasse Principle.
\end{thm}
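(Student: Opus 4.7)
The plan is to apply Theorem \ref{thm:main-1} to obtain $\br X / \br k \simeq \Z/3\Z$, to exhibit an explicit generator of this cyclic group as a cyclic algebra arising from the norm-form structure, and finally to show $\sum_v \inv_v \calA(P_v)$ is a fixed nonzero element of $\tfrac{1}{3}\Z/\Z$ on every adelic point. Before anything else, I would verify $X(\adeles_k)\neq\emptyset$: the unit hypothesis on $\phi_i,\psi_i$ together with separability of the $\psi_i/\phi_i$ modulo primes above $3$ gives smooth reduction at those delicate primes, while the condition $p\|\theta\thetabar$ furnishes a smooth $\mathbb{F}_p$-point by inspection, and at the remaining finite bad primes smooth local points are produced directly.

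Next I verify the hypothesis $9\mid[L'\colon k]$ of Theorem \ref{thm:main-1}. Since $[L\colon k]=6$, one factor of $3$ is already supplied by $\Gal(L/k)\simeq S_3$; the extra factor comes from Frobenius at $p$. Because $L/K$ is unramified cubic cyclic and $p\calO_L=\calP_1\calP_2$ with each $\calP_i$ prime, necessarily $p\calO_K=\fp_1\fp_2$ with each $\fp_i$ inert in $L$, so the Frobenius at any $\calP_i$ has order $3$ in $\Gal(L/K)$. Combined with $p\|\theta\thetabar$, this Frobenius acts on the 27 lines through a $3$-cycle not already accounted for by $\Gal(L/k)$. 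Thus $9\mid[L'\colon k]$, and Theorem \ref{thm:main-1} gives $\br X/\br k\simeq\Z/3\Z$.

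For an explicit generator, I use the cyclic cubic extension $L/K$ with a generator $\sigma\in\Gal(L/K)$. Rewriting the defining equation over $K$ as $\norm_{L/K}(\ell)=py(x+\theta y)(x+\thetabar y)$ for a linear form $\ell$ in $L[x,y,z,w]$, one obtains a natural cyclic algebra $\calA_K=(L/K,f)\in\br X_K$, with $f$ the ratio of a linear factor to a chosen norm factor. Its corestriction $\calA=\operatorname{cor}_{K/k}\calA_K$ lies in $\br X$ (unramifiedness follows from residue computations along the 27 exceptional curves) and represents the nontrivial class modulo $\br k$.

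The hardest step, and the heart of the proof, is the local-invariant calculation $\sum_v\inv_v\calA(P_v)$. At every place where $L/K$ is unramified and all relevant data are units, $\calA$ splits locally and $\inv_v=0$; only $p$ survives. There, the defining equation together with $p\|\theta\thetabar$ forces the $p$-adic valuation of the right-hand side, interpreted modulo cubes, to be nontrivial on every $X(k_p)$-point. Applying the tame-symbol formula for a cubic cyclic algebra then pins $\inv_p\calA(P_p)$ to a fixed value $\pm\tfrac{1}{3}\in\Q/\Z$, independent of $P_p$. Hence $\sum_v\inv_v\calA(P_v)\neq 0$ for every adelic point, yielding the claimed Brauer--Manin obstruction.
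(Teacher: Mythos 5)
Your overall strategy — establish an adélic point, identify an explicit cyclic algebra generating $\br X/\br k$, then show the sum of local invariants is fixed and nonzero — is the same as the paper's. The paper simply cites Lemma~\ref{adeles} for the adélic point, Theorem~\ref{zeros} to kill all invariants except at $p$, and Theorem~\ref{obs1} to show the remaining invariant is nonzero.

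However, your second step, the claimed direct verification that $9\mid[L'\colon k]$, has a genuine gap. You argue that because $\calP_i$ is inert over $K$ in $L$, the Frobenius at $\calP_i$ has order $3$ in $\Gal(L/K)$, and conclude this gives a ``$3$-cycle not already accounted for by $\Gal(L/k)$.'' This is incoherent: $\Gal(L/K)\subseteq\Gal(L/k)$, so that Frobenius element is already inside $\Gal(L/k)$ and says nothing about whether $L'$ is strictly larger than $L$. Whether $3\mid[L'\colon L]$ is governed by the solvability over $L$ of the system~\eqref{bigLines} cutting out the $L_{(i,j,k),n}$, which depends on the discriminant-type algebra attached to $d$, $\theta$, $\thetabar$, $\phi_i$, $\psi_i$ — not on the local decomposition behavior of $p$ alone. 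Your Frobenius argument does not interact with that system at all, so the step as written does not establish $9\mid[L'\colon k]$.

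The fix is the paper's actual logic, which runs in the opposite direction: one defines the candidate cyclic algebra $\calA_K=(L(X)/K(X),(x+\theta y)/y)$ without yet knowing it is nontrivial, then carries out the invariant computation at the places over $p$. Theorem~\ref{obs1} shows the sum over $K$-places is $\pm\tfrac13\neq0$ on every adélic point; since a class in the image of $\br k$ (resp.\ $\br K$) would have vanishing invariant sum by global reciprocity, nontriviality of $\calA$ in $\br X/\br k$ — and therefore $9\mid[L'\colon k]$ by Theorem~\ref{thm:main-1} — falls out as a \emph{consequence} of the nonzero invariant, rather than being a prerequisite to be verified separately. If you restructure to let the invariant computation carry that weight, your step 4 is correct and essentially matches Theorems~\ref{zeros} and~\ref{obs1}. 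The corestriction presentation of the generator is also a mild deviation from the paper's Corollary following Lemma~\ref{orderThree}; it is not wrong, but you would need to check unramifiedness of the corestricted class, which the paper sidesteps by working with $\calA_K$ directly and exploiting that $[K\colon k]=2$ is prime to $3$.
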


Note that if $K$ has class number 1 then we may assume $\theta$ and $\thetabar$ are coprime via a scaling of $y$ by their greatest common divisor.  In that case, we can assume $\calP_1\mid\theta$ and $\calP_2\mid\thetabar$, making this conclusion true whenever $\val_{\calP_1}(\theta)\not\equiv 0\pmod 3$.

\subsection{The Hasse Principle for Cubic Surfaces}

After the paper of Birch and Swinnerton-Dyer, \CT\ and Sansuc conjectured that the Brauer--Manin obstruction is the only obstruction to the Hasse principle for arbitrary smooth projective geometrically rational surfaces \cite[Questions j$_1$, k$_1$, page 233]{cts}.  Some motivation for this conjecture came from the study of conic bundles.

In 1987, \CT, Kanevsky and Sansuc \cite{ctks} systematically studied diagonal cubic surfaces over $\Q$ having integral coefficients up to 100, verifying the conjecture for each one of these surfaces.  They were the first to prove that the Cassels and Guy cubic \eqref{CGcubic} had a Brauer--Manin obstruction.  

This conjecture has been extended by \CT\ to all rationally connected varieties \cite{ctconj}; evidence of this generalization has been seen recently in works of Harpaz and Wittenberg (see e.g.,~\cite{harpaz-witt}).

Work by Elsenhans and Jahnel has shown general construction for lines on cubic surfaces as well as the resulting elements in $\Br k(X)$, \cite{EJ12}.  More recently, they prove that cubic surfaces violating the Hasse principle are Zariski dense in the moduli space of all cubic surfaces, \cite{EJ15-2}.  This paper, on the other hand, wishes to show explicit computations for the particular family described by Birch and Swinnerton-Dyer.

\subsection{Outline}
The notation for the paper will be fixed in section \ref{notation}.  We will explicitly describe the Brauer group for the BSD cubic surfaces in section \ref{brauerGroup} and subsequently prove Theorem \ref{thm:main}.  This computation will exploit the exceptional geometry of cubic surfaces and the previous results of Corn, \cite{corn}, and Swinnerton-Dyer, \cite{sd93,sd99}.

In section \ref{inv}, we prove the existence of an ad\'elic point for a family of surfaces followed by general computations of the Brauer set.  Theorems \ref{zeros} and \ref{obs1} show that we only need to consider certain primes which divide the coefficient $d$.  This section concludes with a proof of Theorem \ref{thm:HasObstruction}.

Lastly, in section \ref{examples}, we first look back at an example given in \cite{BSD} and verify that its obstruction is given by the results of section \ref{inv}.  A second example with a Brauer--Manin obstruction given by two non-zero invariant summands is then presented.

\section{Setup and background}\label{notation}
Let $k$ be a number field with absolute Galois group $G_k$. Take $L/k$ any Galois extension with $\gal(L/k)\simeq S_3$. Fix $K/k$ as the unique quadratic extension of $k$ in $L$.  Let $\calO_F$ be the ring of integers for the field $F$.
\begin{lem}
  Every BSD cubic \eqref{BSDeqn} equation is isomorphic to one of the form
  \begin{equation}\label{X}\LHS = \RHS,\end{equation}
  where $d\in \calO_k$, and $\{\phi_0,\ \phi_1,\ \phi_2\}$, $\{\psi_0,\ \psi_1,\ \psi_2\}\subseteq\calO_L$ and $\{\theta,\ \thetabar\} \subseteq \calO_K$ are respective Galois conjugates over $k$ with $(1,\phi_i,\psi_i)$ being a $k$-basis for a degree 3 extension of $k$.
\end{lem}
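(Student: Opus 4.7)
The plan is to expand both norms in \eqref{BSDeqn} as products of Galois conjugates, then apply a sequence of $k$-rational linear substitutions in the $(x,y)$-plane together with diagonal rescalings of the four coordinates to put the equation in the form $\LHS=\RHS$.

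Writing $\sigma$ for a generator of $\gal(L/K)\simeq\Z/3\Z$ and setting $\phi_i=\sigma^i\phi$, $\psi_i=\sigma^i\psi$, the norms expand as $\norm_{L/K}(ax+by+\phi z+\psi w)=\prod_{i=0}^2(ax+by+\phi_iz+\psi_iw)$ and $\norm_{K/k}(x+\theta y)=(x+\theta y)(x+\thetabar y)$. Assume the genericity $a\neq 0$ and $ad\neq bc$ (the remaining cases are either degenerate or reduce to this after swapping the role of $x$ and $y$). The substitution $x\mapsto x-(b/a)y$ eliminates the $y$-term in every left factor, producing $ax+\phi_iz+\psi_iw$, and alters the right side to $(cx+\tfrac{ad-bc}{a}y)(x+(\theta-\tfrac{b}{a})y)(x+(\thetabar-\tfrac{b}{a})y)$, whose last two factors are still Galois conjugate over $k$. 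Composing with $y\mapsto y-\gamma x$ for $\gamma=ca/(ad-bc)\in k$ kills the $x$-coefficient of the first right factor, leaving $\tfrac{ad-bc}{a}\,y$, and turns $x+\theta'y$, $x+\thetabar'y$ into $(1-\theta'\gamma)(x+\theta''y)$, $(1-\thetabar'\gamma)(x+\thetabar''y)$ with $\theta''=\theta'/(1-\theta'\gamma)\in K$ and $\thetabar''$ its conjugate. The left factors are untouched by this second step because they no longer depend on $y$. Pulling the resulting $k$-scalars to one side yields
\[C_1\prod_i\bigl(x+\tfrac{\phi_i}{a}z+\tfrac{\psi_i}{a}w\bigr)=C_2\,y(x+\theta''y)(x+\thetabar''y),\]
with $C_1=ma^3\in k^\times$ and $C_2=\tfrac{ad-bc}{a}\cdot\norm_{K/k}(1-\theta'\gamma)\in k^\times$.

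Finally, rescale $z,w$ by a common $\mu\in k^\times$ that clears the denominators of $\phi_i/a,\psi_i/a$ in $\calO_L$, and rescale $y$ by a $\lambda\in k^\times$ chosen to clear the denominators of $\theta'',\thetabar''$ in $\calO_K$ and simultaneously to make $\lambda C_2/C_1\in\calO_k$; such a $\lambda$ exists because the intersection of any two nonzero fractional ideals of $k$ is nonzero. Dividing by $C_1$ puts the equation in the form $\LHS=\RHS$ with $d=\lambda C_2/C_1\in\calO_k$, $\phi_i,\psi_i\in\calO_L$, and $\theta,\thetabar\in\calO_K$. Galois conjugacy of the $\phi_i,\psi_i$ (resp.\ $\theta,\thetabar$) persists because every substitution was defined over $k$, and $(1,\phi_i,\psi_i)$ remains a $k$-basis for $\sigma^i(K_0)$ because all adjustments to $\phi_i,\psi_i$ are by $k^\times$-scalars. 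The main obstacle is the careful bookkeeping of the five constants $a,b,c,d,m$ through the two substitutions and verifying that a single choice of $\mu,\lambda$ can satisfy the three integrality conditions simultaneously; once that is done, the conclusion is immediate.
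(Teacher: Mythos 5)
Your proof is correct and takes essentially the same approach as the paper: a $k$-rational linear change of the $(x,y)$-coordinates (which the paper writes as the single map $[x\colon y\colon z\colon w]\mapsto[ax+by\colon (cx+dy)/\norm_{K/k}(d-c\theta)\colon z\colon w]$, and which you decompose into the two shears $x\mapsto x-(b/a)y$ and $y\mapsto y-\gamma x$) followed by rescaling coordinates to achieve integrality. The only cosmetic difference is that the paper's one-step map only needs $ad-bc\neq 0$ whereas your decomposition additionally requires $a\neq 0$, a case you correctly note can be handled by swapping $x$ and $y$.
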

\begin{proof}
  There is an isomorphism of varieties given by \[[x\colon y\colon z\colon w]\mapsto \left[ax+by\colon \frac{cx+dy}{(d-c\theta)(d-c\thetabar)}\colon z\colon w\right],\]
  from the surface in $\bbP^3_k$ defined by \eqref{BSDeqn}, to the surface in $\bbP^3_k$ defined by
  \begin{equation*}
    m(ad-bc)^2\prod_{i=0}^2(x+\phi_i z+\psi_i w)=((d-c\theta)(d-c\thetabar))^2y(x+\theta' y)(x+\thetabar' y),
  \end{equation*}
  where $\theta'=(-b+a\theta)(d-c\thetabar)$.
  A subsequent isomorphism given by scaling variables results in \eqref{X}.
\end{proof}

Let $X$ to be the closed subvariety of $\bbP_k^3$ defined by \eqref{X} and let $\Xbar$ be the base change of $X$ to the separable closure of $k$. Take $\pic \Xbar$ to be the Picard group of $\Xbar$.  For a fixed $\calA \in\br X\coloneqq \operatorname{H^2_{\et }}(X,\G_m)$, there is a commutative diagram
\begin{center}
  \begin{tikzcd}
    \
    &X(k)\arrow[hook]{r}\arrow{d}{\ev_{\calA}}
    &X(\Adeles_k)\arrow{d}{\ev_{\calA}}\arrow{rd}{\Phi_\calA}\\
    0\arrow{r}
    &\br k\arrow{r}
    &\oplus_v \br k_v\arrow{r}[swap]{\inv}
    &\Q/\Z\arrow{r}
    &0
  \end{tikzcd}
\end{center} 
where $\ev_\calA$ is the specialization of $\calA$ and $\inv=\sum_v\inv_v$ is the \emph{invariant map}.  The \emph{Brauer Set} is defined as $X(\adeles_k)^{\br }\coloneqq\bigcap_{\calA\in \br X}\Phi_\calA^{-1}(0)$.  

There is an inclusion of $\br X$ into $\br k(X)$, so elements of $\br X$ can be realized as Azumaya Algebras over the field $k(X)$.  Moreover, Azumaya Algebras over a field such as $k(X)$ are exactly the central simple algebras over $k(X)$.  Assume $F_1/F_2$ are fields, $\gal(F_1/F_2)=\langle\sigma\rangle$ is cyclic of order $n$ and $a\in F_2^*$, then the \emph{cyclic algebra} $(F_1/F_2,a)$ is defined to be the quotient $F_1[T]_\sigma/(T^n-a)$.  Here $F_1[T]_\sigma$ is the twisted polynomial ring, i.e., $Tb=\sigma(b)T$ for all $b\in F_1$.  Further such cyclic algebras are central simple algebras, so $(F_1/F_2,a)$ can be used to represent an equivalence class in $\Br F_2$.

\section{Proof of Main Theorem}\label{brauerGroup}
Since $X$ is rational, $\ker (\br X\to\br\Xbar)=\br X$, \cite[Thm. 42.8]{manin2}, and there is an isomorphism given by the Serre Spectral Sequence and Tate correspondence, 
\begin{equation}\label{quotientIso}\br X/\br k\xrightarrow{\sim} \operatorname{H^1}(G_k,\pic\Xbar).\end{equation} Moreover, $\Phi_\calA$ factors through this quotient.  Therefore it is sufficient to calculate this finite group rather than determining the entirety of $\br X$.  

Swinnerton-Dyer and Corn provide a classification of possible $\operatorname{H^1}(G_k,\pic\Xbar)$ via the following structures existing within the set of exceptional curves on $\Xbar$.
\begin{defn}
	A \emph{double-six} on $X$ is a set $\{L_1,\dots,L_6\}\cup \{M_1,\dots,M_6\}$ of 12 exceptional curves on $X$ such that the $L_i$ are pairwise skew, the $M_i$ are pairwise skew, and $L_i$ intersects $M_j$ exactly when $i\neq j$.
	
	  A \emph{nine} on $X$ is a set consisting the three skew curves together with six curves each of which intersect exactly two of those three. A \emph{triple-nine} is a partitioning of the 27 exceptional curves on $X$ into three nines.
\end{defn}
\begin{lem}\label{lem:classify}
	Let $X$ be a cubic surface defined over the number field $k$.
	\begin{enumerate}
		\item (\cite[Lem.~1.3.9]{corn}, \cite[Lem.~1]{sd93}) Elements of $\br X/\br k$ of order 2  correspond to $G_k$-stable double-sixes such that neither six is itself $G_k$-stable, no pair $\{L_i,M_i\}$ is $G_k$-stable and no set of three such pairs is $G_k$-stable.
		\item (\cite[Lem.~1.3.22]{corn}, \cite[Lem.~6]{sd93})\label{lem:classify-order3} There is a one-to-one correspondence between order 3 elements of $\br X/\br k$ and triple-nines on $X$ such that each nine is itself $G_k$-stable but no set of three skew lines within any nine is $G_k$-stable.
		\item (\cite[Lem.~5]{sd93}) In order for $\br X/\br k$ to have more than 2 elements of order 3 it is necessary and sufficient that the field of definition for the exceptional curves be of order 3 over $k$.
		\item (\cite[Thm.~1.4.1]{corn},\cite{sd93}) The quotient $\br X/\br k$ is isomorphic to one of  $\{1\}, \Z/2\Z, (\Z/2\Z)^2,\Z/3\Z,$ or $(\Z/3\Z)^2$.
	\end{enumerate}
\end{lem}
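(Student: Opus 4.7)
The plan is to reduce every statement to a computation of $\operatorname{H^1}(G_k,\pic\Xbar)$ via the isomorphism \eqref{quotientIso}, then to exploit the rigid lattice structure of $\pic\Xbar$. Recall that for a smooth cubic surface over $\kbar$, the Picard group is a free $\Z$-module of rank $7$ equipped with the intersection pairing, and its group of isometries preserving the canonical class is the Weyl group $W(E_6)$ of order $51840$. The Galois action therefore factors through some finite subgroup $H\subseteq W(E_6)$, and each assertion becomes the output of finitely many cohomology computations, one per conjugacy class of such $H$.

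For part (1) I would produce order-$2$ classes directly from double-sixes: a $G_k$-stable double-six defines a homomorphism $H\to\Z/2\Z$ recording whether a Galois element swaps the two sixes, which is exactly the data of a nonzero $1$-cocycle in $\operatorname{H^1}(H,\pic\Xbar)[2]$. The three listed non-stability conditions correspond to the three geometric scenarios in which this cocycle degenerates to a coboundary: if one six is itself stable the map is zero, if a pair $\{L_i,M_i\}$ is stable the cocycle factors through a smaller invariant sublattice, and if three such pairs are jointly stable one recovers a coboundary on that sublattice. This cocycle/coboundary dictionary is the content of \cite[Lem.~1.3.9]{corn} and \cite[Lem.~1]{sd93}.

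For part (2) the story is entirely parallel using triple-nines: a triple-nine yields a homomorphism from $H$ to the permutation group of the three nines, and this is a $1$-cocycle valued in $\Z/3\Z$ precisely when each nine is $G_k$-stable, and fails to be a coboundary precisely when no $G_k$-stable triple of skew lines lies in a single nine. Part (3) then follows by observing that a second independent order-$3$ class can arise only when $H$ admits two independent stable triple-nine decompositions; a case analysis inside $W(E_6)$ shows this forces $H$ to be cyclic of order $3$ acting nontrivially on the 27 lines, equivalently $[L'\colon k]=3$.

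Finally, part (4) is obtained by running these two criteria through the subgroup lattice of $W(E_6)$ and verifying that no admissible $H$ produces a cohomology group outside the five listed possibilities. Rather than re-deriving this enumeration I would quote it from \cite{corn} and \cite{sd93}. The main obstacle throughout is combinatorial rather than conceptual: identifying the correct cocycle representatives inside $\pic\Xbar$ and confirming, conjugacy class by conjugacy class, that the geometric conditions on double-sixes and triple-nines match the algebraic non-vanishing of cohomology.
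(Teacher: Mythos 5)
The paper itself offers no proof of this lemma: all four parts are quoted from Corn and Swinnerton-Dyer, so the benchmark is whether your sketch faithfully reflects the cited arguments. Your overall framing is right — the Galois action on $\pic\Xbar$ factors through a subgroup $H$ of $W(E_6)$, one computes $\coho{1}(G_k,\pic\Xbar)$ via the isomorphism \eqref{quotientIso}, and part (4) is a finite enumeration over conjugacy classes that one quotes rather than re-derives. Deferring to \cite{corn} and \cite{sd93} for that enumeration is exactly what the paper does.

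However, your proposed mechanism for part (2) would fail as stated. You take the cocycle to be ``the homomorphism from $H$ to the permutation group of the three nines,'' and claim it is a $\Z/3\Z$-valued cocycle precisely when each nine is $G_k$-stable. But if each nine is $G_k$-stable, that homomorphism is by definition trivial, so your construction outputs the zero class under the very hypothesis that is supposed to produce an element of order 3. In the cited results the order-3 class does not come from permuting the three nines; it comes from the action \emph{within} a stable nine. Writing a nine as a $3\times3$ array whose rows and columns are skew triples, the relevant divisor classes are built from those skew triples (this is exactly what the paper's Corollary to Lemma \ref{orderThree} does with $D=L_{0,0}+L_{1,1}+L_{1,0}-\div(y)$), and the condition that no skew triple inside a nine is $G_k$-stable is what prevents the resulting cocycle from being a coboundary. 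A related, smaller gap affects part (1): a homomorphism $H\to\Z/2\Z$ recording the swap of the two sixes is an element of $\coho{1}(H,\Z/2\Z)$, not of $\coho{1}(H,\pic\Xbar)$; one must realize it through a divisor class such as $\sum M_i-\sum L_i$, and the two extra non-stability conditions are precisely what rule out the coboundary cases there. So the cocycle representatives you postpone identifying are not a routine combinatorial afterthought — getting them right is where the correspondence actually lives, and for part (2) the representative you name is the wrong one.
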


\begin{proof}[Proof of \ref{thm:main-1}.]
  There are 9 lines, $L_{i,j}$, defined by $0=x+\phi_i z+\psi_i w$ and 
  \[0=\left\{\begin{array}{cc} 
  y&\textup{if }j=0,\\
  x+\theta y &\textup{if } j=1,\\
  x+\thetabar y & \textup{if } j=2,
  \end{array}\right.\] 
  and 18 lines, $L_{(i,j,k), n}$ given by $z=Ax+By$ and $w=Cx+Dy$ such that $A,\ B,\ C,$ and $D$ satisfy the system of equations,
  \begin{equation}\label{bigLines}\left\{\begin{array}{l}
    1+A\phi_i +C\psi_i\ =\ 0,\\
    \theta(1+A\phi_j+C\psi_j)=(B\phi_j+D\psi_j),\\
    \thetabar(1+A\phi_k+C\psi_k)=(B\phi_k+D\psi_k),\\
    (B\phi_0+D\psi_0)(B\phi_1+D\psi_1)(B\phi_2+D\psi_2)=d\theta\thetabar.
    \end{array}\right.\end{equation}
	Here the value of $n$ in $L_{(i,j,k), n}$ corresponds to the three possible solutions of the system \eqref{bigLines} for a fixed triple $(i,j,k)$.
	
  As a result of the definition and intersection properties of lines on cubic surfaces, we can write a nine as a $3\times 3$ matrix
  \begin{equation*} 
  \begin{pmatrix}
  	\ell_{1,1}&\ell_{1,2}&\ell_{1,3}\\
  	\ell_{2,1}&\ell_{2,2}&\ell_{2,3}\\
  	\ell_{3,1}&\ell_{3,2}&\ell_{3,3}
  \end{pmatrix},
  \end{equation*}
  such that the intersection pairing has
	  \begin{equation*}
		  (\ell_{i,j},\ell_{m,n})=\begin{cases} 1& \textup{if }i\neq m \textup{ and }j\neq n,\\
		  -1& \textup{if }i=m \textup{ and } j=n,\\
		  0& \textup{otherwise}.
		 \end{cases}
		\end{equation*}
  
  Let $L'/k$ be the field of definition for the 27 lines.  
  A triple-nine for which the individual nines are fixed by $G_k$ is
  \begin{equation}\label{triplenine}
  {
  \begin{pmatrix}
    L_{0,0}&L_{1,1}&L_{2,2}\\
    L_{1,2}&L_{2,0}&L_{0,1}\\
    L_{2,1}&L_{0,2}&L_{1,0}
  \end{pmatrix},
  \begin{pmatrix}
    L_{(0,1,2),0}&L_{(0,1,2),1}&L_{(0,1,2),2}\\
    L_{(1,2,0),0}&L_{(1,2,0),1}&L_{(1,2,0),2}\\
    L_{(2,0,1),0}&L_{(2,0,1),1}&L_{(2,0,1),2}
  \end{pmatrix},
  \begin{pmatrix}
    L_{(0,2,1),0}&L_{(0,2,1),1}&L_{(0,2,1),2}\\
    L_{(1,0,2),0}&L_{(1,0,2),1}&L_{(1,0,2),2}\\
    L_{(2,1,0),0}&L_{(2,1,0),1}&L_{(2,1,0),2}
  \end{pmatrix}.
   }
   \end{equation}
  The Galois group $G_k$ permutes the first nine, fixing no skew triple.  The rows of the second two nines will be permuted via the permutation action on the roots $(\phi_0,\phi_1,\phi_2)$.  The action of $G_k$ on the columns of the second nines will determine whether or not any skew triple is fixed.  If $3\mid[L'\colon L]$, then the columns of the second two nines are permuted so by Lemma \ref{lem:classify} $\operatorname{H^1}(G_k,\pic\Xbar)\simeq \Z/3\Z$.  Otherwise $9\nmid[L'\colon k]$ and the columns in the second two nines are fixed.  As the first nine is the only $G_k$-stable nine containing these exceptional curves, there are no other possible triple nines and thus there are no elements of order 3 in $\br X/\br k$.  Furthermore, any $G_k$-stable double six will have a set of 3 skew pairs which are $G_k$-stable.  Therefore, Lemma \ref{lem:classify} the only possibility is for $\br X/\br k\simeq\{1\}$.
\end{proof}

\begin{proof}[Proof of \ref{thm:main-2}]
	If $\br X/\br k$ is trivial, then the triple nine as in \eqref{triplenine} will have enough fixed skew triples to build a set of six skew lines which is $G_k$-stable.  In particular take two disjoint columns from the second two nines.   We blow down $X$ along these six skew lines to obtain a degree 9 del Pezzo surface $X'$ defined over $k$.  It is well-known that degree 9 del Pezzo surfaces satisfy the Hasse principle.  So by the Lang-Nishimura lemma $X$ must also satisfy the Hasse principle (cf.~\cite{Nis55}).
\end{proof}

The map in \eqref{quotientIso} is generally difficult to invert.  We achieve this via the following result of Swinnerton-Dyer and Corn.

\begin{lem}[{\cite[Lem. 2]{sd99}, \cite[Prop. 2.2.5]{corn}}]\label{orderThree}
  Non-trivial elements of $\operatorname{H^1}(G_k,\pic \Xbar)[3]$ correspond to $\calA\in\br X$ such that $\calA\otimes_k K = (L(X)/K(X),f)$ in the group $\br X_K/\br K$, where $\div(f)=N_{L/K}(D)$ in $\Div X_L$ for some non-principal divisor $D$.
\end{lem}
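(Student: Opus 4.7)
The plan is to first reduce to a computation over $K$, then exploit the cyclic structure of $\gal(L/K)\simeq\Z/3\Z$ to unwind the cohomology class into explicit divisors and a cyclic algebra. Since $[K\colon k]=2$ is coprime to $3$, the composition $\operatorname{cor}\circ\operatorname{res}$ acts as multiplication by $2$ on $\coho{1}(G_k,\pic\Xbar)[3]$ and so restriction is injective; via \eqref{quotientIso} the class $\calA\in\br X/\br k$ is determined by $\calA\otimes_kK\in\br X_K/\br K$. Inflation--restriction for the tower $\overline K/L/K$ embeds $\coho{1}(\gal(L/K),\pic X_L)$ into $\coho{1}(G_K,\pic\Xbar)$, and the triple-nine description from Lemma~\ref{lem:classify} ensures that the relevant $3$-torsion class is killed by restriction to $G_L$ and so lies in this image.

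Next, I would convert cocycles into functions. Let $\sigma$ generate $\gal(L/K)$ and set $N=1+\sigma+\sigma^2$. A class in $\coho{1}(\langle\sigma\rangle,\pic X_L)$ is represented by a single $c_0\in\pic X_L$ satisfying $Nc_0=0$, modulo $(\sigma-1)\pic X_L$. Lifting $c_0$ to an honest divisor $D\in\Div X_L$, the cocycle relation forces $N_{L/K}(D)$ to be principal, and since this divisor is $\gal(L/K)$-invariant, Hilbert~90 yields $f\in K(X)^*$ with $\div f=N_{L/K}(D)$. This construction can be made functorial through the connecting homomorphism attached to $0\to\mathrm{Prin}(X_L)\to\Div X_L\to\pic X_L\to 0$ combined with the vanishing $\coho{1}(\gal(L/K),\Div X_L)=0$ coming from the permutation-module structure of divisors.

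To identify the Brauer class as a cyclic algebra, recall that Tate periodicity for the cyclic group $\gal(L/K)$ identifies $K(X)^*/N_{L(X)/K(X)}L(X)^*$ with $\widehat H^2(\gal(L/K),L(X)^*)$, exactly the source of cyclic algebras $(L(X)/K(X),f)$. Chasing the Hochschild--Serre diagram for the cover $X_L\to X_K$ then matches $\calA\otimes_kK$ with this class in $\br X_K/\br K$. Unramifiedness of the algebra on $X_K$, rather than merely in $\br K(X)$, is the essential remaining check: the residue at a codimension-one point $P$ is computed from $v_P(f)$ together with the decomposition of $P$ in $L/K$, and the norm form $\div f=N_{L/K}(D)$ forces all such residues to vanish. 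Finally, non-principality of $D$ is equivalent to $c_0\neq 0$ in $\pic X_L$, hence to non-triviality of $\calA$ modulo $\br k$.

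The hard part will be the inflation identification, where one must control the discrepancy between $\pic X_L$ and $(\pic\Xbar)^{G_L}$ via the Hochschild--Serre edge into $\br L$, and confirm that every $3$-torsion class is genuinely trivialized over $L$. The cubic-surface geometry---and the cited works of Swinnerton-Dyer and Corn---are precisely what make this step tractable.
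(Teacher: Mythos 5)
The paper does not prove this lemma---it is cited directly from Swinnerton-Dyer \cite{sd99} and Corn \cite{corn}---so there is no ``paper's proof'' to match. Your sketch follows the standard abstract route (reduce to $K$, inflate from $\gal(L/K)$, lift through the divisor exact sequence, and identify the result by Tate periodicity as a cyclic algebra), which is the right framework and correctly handles the easy pieces: injectivity of restriction on 3-torsion, the vanishing of $\coho{1}(\gal(L/K),\Div X_L)$ for the permutation module, the Hilbert~90 descent from $L(X)^*$ to $K(X)^*$, and the residue check that $\div f = N_{L/K}(D)$ forces unramifiedness of $(L(X)/K(X),f)$ along inert divisors.

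The genuine gap is the step you flag yourself as ``the hard part'' but then lean on as if settled: the claim that the order-$3$ class dies under restriction to $G_L$, so that it lies in the image of inflation from $\coho{1}\bigl(\gal(L/K),\pic X_L\bigr)$. You attribute this to the triple-nine classification in Lemma~\ref{lem:classify}, but that lemma only classifies order-$3$ elements over a \emph{fixed} base; it does not say that an order-$3$ class over $k$ becomes trivial over the cubic subextension $L$. Indeed, when $9\mid[L':k]$ the splitting field $L'$ strictly contains $L$, $\gal(L'/L)$ still acts nontrivially on the $27$ lines (it permutes the columns of the second two nines), and $\coho{1}(G_L,\pic\Xbar)$ need not vanish a priori. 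The only clean way to see that $\alpha_L=0$ is essentially to already have the conclusion of the lemma: once one knows $\calA\otimes_k K$ is a cyclic algebra with splitting field $L(X)$, triviality over $L$ is automatic---but that is circular. Swinnerton-Dyer's and Corn's arguments go the other way around: they \emph{construct} a concrete non-principal divisor $D\in\Div X_L$ from the triple-nine whose norm is principal, manufacture the cyclic algebra, and only then show it represents the nontrivial class; the triviality over $L$ is a byproduct, not an input. Without that explicit construction (or an independent cocycle computation showing $\operatorname{res}^{G_K}_{G_L}\alpha_K=0$), the inflation step is unjustified, and it carries the real content of the cited result.

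A smaller point: after producing the class over $K$, the lemma asserts it is $\calA\otimes_k K$ for some $\calA\in\br X$. Injectivity of restriction on 3-torsion gives uniqueness of such an $\calA$, but the surjectivity direction of the correspondence (that the constructed cyclic algebra actually descends to a class coming from $\br X/\br k$) would still need an argument, e.g.\ via corestriction or by exhibiting the explicit $G_k$-stable triple-nine it corresponds to.
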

\begin{cor}
  If\: $\operatorname{H^1}(G_k,\pic\Xbar)\simeq \Z/3\Z$ then it is generated by an algebra $\calA$ such that $\calA\otimes_k K\simeq \left(L(X)/K(X),\frac{x+\theta y}{y}\right)$.
\end{cor}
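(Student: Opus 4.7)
My plan is to apply Lemma \ref{orderThree} with the explicit choice $f=(x+\theta y)/y$. Concretely, I will compute $\div(f)$ as a divisor on $X_K$, rewrite it as $N_{L/K}(D)$ for some $D\in\Div X_L$, and verify that $D$ is non-principal via an intersection-number computation. Since the hypothesis forces $\coho{1}(G_k,\pic\Xbar)\simeq\Z/3\Z$, any non-trivial class in it generates, and Lemma \ref{orderThree} will then identify that generator with $\bigl(L(X)/K(X),(x+\theta y)/y\bigr)$ in $\Br X_K/\Br K$.

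First I would compute the two relevant hyperplane sections using the description of the lines from the proof of \ref{thm:main-1}. Setting $y=0$ in \eqref{X} annihilates the right-hand side and factors the left, so $\{y=0\}\cap X = L_{0,0}+L_{1,0}+L_{2,0}$; likewise $\{x+\theta y=0\}\cap X = L_{0,1}+L_{1,1}+L_{2,1}$. Taking the ratio,
\[
\div\!\left(\tfrac{x+\theta y}{y}\right)\;=\;\sum_{i=0}^{2}L_{i,1}\;-\;\sum_{i=0}^{2}L_{i,0}\qquad\text{in }\Div X_K.
\]

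Next I would rewrite this as a norm. The subgroup $\gal(L/K)\subset\gal(L/k)\simeq S_3$ is the unique index-two subgroup, hence cyclic of order three, and it acts by cyclically permuting $(\phi_0,\phi_1,\phi_2)$ while fixing $\theta$ and $\thetabar$. Consequently, for each fixed $j\in\{0,1,2\}$ the lines $L_{0,j},L_{1,j},L_{2,j}$ form a single $\gal(L/K)$-orbit, so $\sum_{i}L_{i,j}=N_{L/K}(L_{0,j})$. Putting the two pieces together yields $\div(f)=N_{L/K}(D)$ with $D\coloneqq L_{0,1}-L_{0,0}$.

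Finally, I would check that $D$ is non-principal via a self-intersection calculation. Both $L_{0,0}$ and $L_{0,1}$ lie in the plane $x+\phi_0 z+\psi_0 w=0$ of $\bbP^3$, so they meet (at the point $[0:0:\psi_0:-\phi_0]$) with intersection number $1$, while each is an exceptional curve of self-intersection $-1$. Hence $D^2=-1-2(1)-1=-4$, and since any principal divisor has self-intersection zero, $D$ is non-principal, so Lemma \ref{orderThree} delivers the claim. The step that most requires care is the Galois-theoretic bookkeeping identifying $\gal(L/K)$ as the cyclic subgroup of $S_3$ that moves only the $\phi_i$-index and fixes the $\theta$-index; once that is in place, the divisor and intersection computations are essentially forced by the explicit equations describing the nine lines $L_{i,j}$.
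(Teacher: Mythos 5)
Your proof is correct and applies Lemma \ref{orderThree} exactly as the paper does, but with a different (and arguably cleaner) choice of auxiliary divisor. The paper takes $D = L_{0,0} + L_{1,1} + L_{1,0} - \div(y)$ and certifies non-principality by pairing against a single line, computing $(D, L_{1,1}) = 0 - 1 + 1 - 1 \neq 0$; you take the leaner degree-zero divisor $D = L_{0,1} - L_{0,0}$ and compute the self-intersection $D^2 = -4 \neq 0$. Both choices lie in the same $N_{L/K}$-fiber above $\div\bigl(\tfrac{x+\theta y}{y}\bigr)$ — their difference $2L_{0,0}+L_{1,1}+L_{1,0}-L_{0,1}-\div(y)$ has trivial norm — so both are valid inputs to the lemma. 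Your non-principality check is slightly more self-contained: it uses only that $L_{0,0}$ and $L_{0,1}$ are $(-1)$-curves meeting transversally once, which is geometrically immediate since they are two of the three lines cut out by the plane $x+\phi_0 z+\psi_0 w=0$; the paper's version additionally tracks intersections with the hyperplane class $\div(y)$. The Galois bookkeeping (that $\gal(L/K)\cong A_3$ cyclically permutes the index $i$ in $L_{i,j}$ while fixing $\theta$, so $\sum_i L_{i,j}=N_{L/K}(L_{0,j})$) is identical in both arguments, and your flag that this is the step requiring care is well placed.
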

\begin{proof}
  Take $D=L_{0,0}+L_{1,1}+L_{1,0}-\div(y)$.  Clearly $D$ is not principal, as the intersection pairing  between $D$ and a line, say $L_{1,1}$, which can be chosen as a generator of $\pic\Xbar$ over $\Z$ is non-zero, that is $(D,L_{1,1})=0-1+1-1\neq0$.  Then
  \begin{align*}
    \norm_{L/K}(D) &= (L_{0,0}+L_{1,1}+L_{1,0}) + (L_{1,0}+L_{2,1}+L_{2,0})+(L_{2,0}+L_{0,1}+L_{0,0}) - 3\div(y)\\
    &= L_{1,1}+L_{2,1}+L_{0,1}-\div(y)\\
    &= \div(x+\theta y)-\div (y)\\
    &= \div\left(\frac{x+\theta y}{y}\right).\tag*{\qedhere}
  \end{align*}
\end{proof}

\section{Invariant map computations}\label{inv}
Since the $\calA\in\br X/\br k$ are explicit, one may compute the map $\phi_\calA$ more easily.  However, before doing so, we would like to verify the existence of an ad\'elic point.
\begin{lem}\label{adeles}
  In addition to the setup of section \ref{notation}, assume the following are true:
  \begin{enumerate}
  \item $L/K$ is unramified, 
  \item $\phi_0\phi_1\phi_2=\psi_0\psi_1\psi_2=\pm1$,
  \item\label{adeles3} the minimal polynomial for $\psi_i/\phi_i$ over $k$ is separable modulo $\fp\mid 3\calO_k$, and
  \item if $\fp\mid d\calO_L$ with $\fp\calO_L=\calP_1\calP_2$ then $\val_1(d)\leq \val_1(\theta)$ with $\val_1(\thetabar) = 0$, equivalently $\val_2(d)\leq \val_2(\thetabar)$ with $\val_2(\theta)=0$, where $v_i$ is the valuation corresponding to $\calP_i$. 
  \end{enumerate}
  Then $X(\Adeles_k)\neq\emptyset$.
\end{lem}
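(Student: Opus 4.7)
The plan is to produce a $k_v$-rational point on $X$ at every place $v$ of $k$, splitting the argument into archimedean places, finite primes of good reduction, and finite primes of bad reduction.

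At archimedean $v$ I will exploit the lines $L_{i,0}=\{y=0,\ x+\phi_i z+\psi_i w=0\}$ identified in the proof of Theorem~\ref{thm:main-1}. Since $K_0/k$ is a cubic extension, it has an odd number of real embeddings at any real place of $k$, so there is some index $i$ with $\phi_i,\psi_i\in k_v$; the line $L_{i,0}$ is then defined over $k_v$ and supplies $k_v$-points in abundance. Complex places are immediate.

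At finite primes $\fp$ of good reduction I will invoke the classical fact that a smooth cubic surface over a finite field always carries a rational point, then lift via Hensel's lemma to $k_v$. This disposes of all but finitely many primes at once.

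The remaining bad-reduction primes are exactly what hypotheses~(1)--(4) are calibrated to control. Condition~(2) makes the $\phi_i$ and $\psi_i$ units in $\calO_L$, so the left-hand side of~\eqref{X} does not develop spurious singularities outside a very restricted set of primes. Condition~(3) ensures that at $\fp\mid 3$ the three planes $x+\phi_i z+\psi_i w=0$ remain distinct modulo $\fp$, so each $L_{i,0}$ reduces to a smooth line on $X_\fp$ from which a $k_v$-point can be selected and lifted. Condition~(1) eliminates ramification contributions from $L/K$. Finally, for a prime $\fp\mid d$ the assumed splitting $\fp\calO_L=\calP_1\calP_2$ forces a $k_v$-factor of $K_0$, and the valuation inequality (4) between $d$, $\theta$, and $\thetabar$ guarantees that the candidate point on the corresponding $L_{i,0}$ is smooth modulo $\fp$ and lifts by Hensel. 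The hardest step will be this last case analysis: enumerating the finitely many bad primes and checking in each case that the Jacobian of~\eqref{X} has maximal rank modulo $\fp$ at the constructed point. Conditions~(1)--(4) are designed precisely so that these Jacobian checks succeed.
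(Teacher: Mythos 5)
Your overall plan (split into archimedean, good reduction, and bad reduction places, and hunt for smooth points on the reductions) is a reasonable one and is in the same spirit as the paper's proof, which dispatches the primes $\fp\nmid 3d\calO_k$ by restricting to the genus-one curve $X\cap V(x)$ and using the Hasse bound, and then treats $\fp\mid 3$ and $\fp\mid d$ separately. Your treatment of the archimedean places and of good-reduction primes (via the Weil estimates for smooth cubic surfaces and Hensel) is fine, though you would still need to account for possible bad-reduction primes that do not divide $3d$; the paper sidesteps this by working on the curve $X\cap V(x)$ rather than the surface.

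The decisive gap is in your treatment of $\fp\mid d$. You assert that the splitting $\fp\calO_L=\calP_1\calP_2$ ``forces a $k_v$-factor of $K_0$,'' so that some line $L_{i,0}$ is defined over $k_\fp$. This is false. Since $L/k$ has degree $6$ and $\fp$ has exactly two primes above it, each $\calP_i$ has residue degree $3$; the decomposition group is therefore the unique order-$3$ subgroup $A_3\le S_3$, which acts transitively on the three cosets of a point stabilizer. Hence $\fp$ is \emph{inert} in $K_0$: $K_0\otimes_k k_\fp$ is a cubic field, not a product with a $k_\fp$ factor, and none of the lines $L_{i,0}=\{y=0,\ x+\phi_iz+\psi_iw=0\}$ is individually $G_{k_\fp}$-stable. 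The strategy of producing a $k_\fp$-point on one of those lines therefore collapses precisely at the primes that hypothesis~(4) is designed for. The paper's proof instead invokes Corn's Lemma~1.3.25 to reduce the problem to producing a $K_\calP$-point (a cubic surface has a $k_\fp$-point as soon as it has a point over a quadratic extension of $k_\fp$), and then, in the case that $\calP$ stays prime in $L$, performs the explicit change of coordinates to $X'\colon d\LHS - y(x+(\theta/d)y)(dx+\thetabar y)=0$ so that the reduction mod $\calP_1$ has the visible smooth point $[\theta_1/d\colon -1\colon 1\colon 1]$, which then lifts. Your proposal would need a mechanism of this kind --- passing to $K_\calP$ and exhibiting a concrete smooth point --- rather than appealing to $k_\fp$-rational lines that do not exist.
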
 
Notice that the fourth requirement includes the stipulation that $\val_1(\thetabar)=0$ or $\val_2(\theta)=0$, this will ensure that $p\nmid \theta\thetabar$.
\begin{proof}
  In most cases, the scheme given by $X\cap V(x)$ will be a genus 1 curve and will subsequently have a $k_\fp$ point by the Hasse bound.  This will be the case whenever $\fp\nmid 3d\calO_k$.

  Suppose $\fp\mid 3\calO_k$ and $\fp\nmid d\calO_k$.  Then $X\cap V(x) \to \bbP^1$ defined by $[0\colon y\colon z\colon w]\mapsto[z\colon w]$ is one-to-one and surjective on $k_\fp$ points.	Assumption \ref{adeles3} provides that at least one of these points is smooth.

  For the primes $\fp$ of $k$ dividing $d$, to show $X(k_\fp)\neq\emptyset$, it will suffice to find a $K_{\calP}$ point for each prime $\calP$ of $K$ dividing $\fp$.  This is a result of the fact that on cubic surfaces the existence of $k_\fp$-rational points is equivalent to that for quadratic extensions of $k_\fp$ (cf.~\cite[Lem. 1.3.25]{corn}).

  If $\calP\mid d\calO_K$ and $\calP$ splits over $L$ then $X_\calP$ is the union of 3 lines all defined over $K_\calP/\calP$ and has many $K_\calP$ points.

  Lastly, suppose $\calP\mid d\calO_K$ and $\calP$ remains prime in $L$.  Then we are in the case of $\val_\calP(d)\leq \val_\calP(\theta)$.  
  Then consider the cubic surface $X'$ in $\bbP^3_k$ defined by the equation,
  \[d\LHS - y\left(x+(\theta/d)y\right)(dx+\thetabar y)=0,\]
  which is isomorphic to $X$.  Note that this equation has $\calO_{\calP_1}$ coefficients since $\val_1(\theta)\geq \val_1(d)$.  Modulo $\calP_1$, the defining equation for $X'$ becomes
  \[X'_1\colon  \thetabar_1y^2\left(x+(\theta/d)_1y\right),\]
  where $\thetabar_1$ and $\left(\theta/d\right)_1$ are the restriction of the respective constants to the quotient $\calO_{\calP_1}/\calP_1$.  The surface $X'_1$ has a smooth point $[\theta_1/d\colon  -1\colon 1\colon 1]$ which will lift to a $K_{\calP_1}$ point, $[x_0\colon y_0\colon z_0\colon w_0]\in X'(K_{\calP_1})$.  Via the isomorphism, we have $[dx_0\colon y_0\colon dz_0\colon dw_0]\in X(K_{\calP_1})$.
\end{proof}

\begin{rmk}
  In the case of $\val_1(d)>\val_1(\theta)$, a similar argument can be made with the additional assumption of the surjectivity of the cube map in $\calO_{\calP_1}/\calP_1$.
\end{rmk}

Of course Lemma \ref{adeles} is not comprehensive; there are surfaces in the class which have ad\'elic points but do not satisfy the conditions listed above.  The intention of this lemma is to provide proof that there are indeed infinitely many surfaces of this form which have an ad\'elic point.

There is a classical formula for $\inv_v$ provided $L_v/K_v$ is unramified given by the local Artin map.
That is, for all places $v$ unramified in $L/K$,
\begin{equation*}
  \inv_v\left(\left(L_v/K_v,f(P_v)\right)_\sigma\right) = \frac{ij}{k}\mod 1,
\end{equation*}
where $i=\val_v(f(P))$, $\sigma^j=\frob_{L_v/K_v}$, and $k=[L_v\colon K_v]$ (cf.~\cite[XIV.2]{ser}).
\begin{thm}\label{zeros}
  Assume the notation of section \ref{notation}.  Suppose $v_0$ is a finite place of $K$ which is unramified in $L/K$ such that $\val_{v_0}(d)= 0\pmod 3$, and that $\theta$ or $\thetabar$ has valuation 0.  Then $\inv_{v_0}(\calA_K(P_{v_0}))=0$ for all $(P_v)\in \prod_v X(K_v)$.  Moreover, $\inv_\infty (\calA_K(P_\infty))=0$.
\end{thm}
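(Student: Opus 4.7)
The plan is to apply the local Artin formula recalled just before the theorem. With $g=(x+\theta y)/y$ the generator of the cyclic algebra $\calA_K$,
\[
\inv_{v_0}\bigl(\calA_K(P_{v_0})\bigr) = \frac{\val_{v_0}(g(P_{v_0}))\cdot j}{[L_{v_0}:K_{v_0}]}\bmod 1.
\]
If $v_0$ splits in $L/K$ then $[L_{v_0}:K_{v_0}]=1$ and the cyclic algebra is already trivial over $K_{v_0}$, so the invariant vanishes. Assume instead that $v_0$ is inert, with $w$ the unique place of $L$ over $v_0$; it then suffices to show that $\val_{v_0}(g(P_{v_0}))\equiv 0\pmod 3$. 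Writing $a=\val_{v_0}(y)$, $b=\val_{v_0}(x+\theta y)$ and $c=\val_{v_0}(x+\thetabar y)$ for a scaled representative of $P_{v_0}$, the target becomes $b\equiv a\pmod 3$.

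The key input is the defining equation \eqref{X}. The group $\Gal(L_{v_0}/K_{v_0})\simeq\Z/3\Z$ permutes the three factors $x+\phi_i z+\psi_i w$ on the left cyclically and fixes $w$; hence all three factors share a common $\val_w$-valuation, so $\val_w$ of their product is divisible by $3$. Since $v_0$ is unramified in $L/K$, the restriction of $\val_w$ to $K_{v_0}^*$ is $\val_{v_0}$, so comparing valuations in \eqref{X} gives
\[
\val_{v_0}(d)+a+b+c\equiv 0\pmod 3.
\]
The hypothesis $\val_{v_0}(d)\equiv 0\pmod 3$ simplifies this to $a+b+c\equiv 0\pmod 3$, making the target $b\equiv a\pmod 3$ equivalent to $c\equiv a\pmod 3$.

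To finish I would run a short case analysis, assuming without loss of generality that $\val_{v_0}(\thetabar)=0$ and rescaling the projective representative so that $(x,y,z,w)$ are integral with at least one coordinate a unit at $v_0$. When $\val_{v_0}(x)\ne\val_{v_0}(y)$, the non-Archimedean triangle inequality together with $\theta,\thetabar\in\calO_K$ and $\val_{v_0}(\thetabar)=0$ forces $c$ to equal either $a$ or $\val_{v_0}(x)$; combining with $a+b+c\equiv 0\pmod 3$ then yields $b\equiv a\pmod 3$. When $\val_{v_0}(x)=\val_{v_0}(y)$ one examines residues modulo $v_0$: if $x+\thetabar y$ is a unit at $v_0$ then $c=a$; otherwise $x\equiv-\thetabar y$ at $v_0$, and provided $\theta$ and $\thetabar$ have distinct residues at $v_0$ one gets $x+\theta y\equiv(\theta-\thetabar)y\pmod{v_0}$, which is a unit, and hence $b=a$. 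The residual sub-case, in which $\theta\equiv\thetabar\pmod{v_0}$ forces $b,c\ge 1$ simultaneously, is the main technical obstacle; I would dispatch it by expanding \eqref{X} modulo higher powers of a uniformizer at $w$ and using the equality of the three $\val_w$-valuations on the left to extract the necessary $3$-divisibility.

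For the Archimedean invariant, the completion $K_\infty$ is either $\R$ or $\C$. Since $\br\C=0$ and $\br\R\cong\tfrac12\Z/\Z$ is $2$-torsion, the image of the $3$-torsion class $\calA_K(P_\infty)$ in $\br K_\infty$ is killed by both $2$ and $3$, hence zero.
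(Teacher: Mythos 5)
Your overall framework is right and several of the cases match the paper, but you've acknowledged a gap yourself, and it is a genuine one: the sub-case $\val_{v_0}(x)=\val_{v_0}(y)$ with $\theta\equiv\thetabar\pmod{v_0}$ (so that $b,c$ can both exceed $a$ simultaneously) is not dispatched, and the suggestion to ``expand \eqref{X} modulo higher powers of a uniformizer'' is not an argument --- it isn't clear what cancellation one would extract, nor why it should produce the needed $3$-divisibility of $b-a$.

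The idea that closes this is already sitting in the left-hand side of \eqref{X} and is what the paper uses: $N=\prod_{i}(x_0+\phi_i z_0+\psi_i w_0)$ is a norm from $L_{v_0}$ to $K_{v_0}$, and in the inert case the residue extension has degree $3$, so the reduced norm form is anisotropic over the residue field. Hence $\val(N)>0$ forces $\overline{N}=0$, which forces $x_0\equiv z_0\equiv w_0\equiv 0\pmod{v_0}$ and therefore $\val(y_0)=0$ (some coordinate must be a unit). This places you automatically in your easy case $\val(x)>\val(y)=0$, and the troublesome sub-case simply cannot occur for a point on $X$. (And if $\val(N)=0$, then all factors on the right are units and $a=b=c=0$ trivially.) You did invoke the Galois action to get $3\mid\val(N)$ --- the same fact the paper uses --- but you missed the stronger consequence that the anisotropy of the norm form pins down the valuations of the coordinates, which is what makes the case analysis collapse. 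Note also that the paper establishes $y_0\ne 0$ outright from the norm-form structure (if $y_0=0$ then $N=0$ forces $x_0=z_0=w_0=0$), which you tacitly assumed when writing $g=(x+\theta y)/y$.

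The Archimedean and split cases are fine and match the paper.
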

\begin{proof}
  (The structure of this proof follows that of \cite[III.5.18]{jahnel}.)
  In the infinite case, we must have $\inv_\infty(\calA_K(P_\infty))=0$, as $[L\colon K]=3$ and $\inv_\infty(\calA_K(P_\infty)) = 0$ or $1/2$.

  Suppose that $v$ splits completely in $L$.  Then $L_v=K_v$ and $(L_v/K_v,f(P_v))$ is trivial.  Thus we must have $\inv_v(\calA_K(P_v))=0$.

  If $v$ remains prime in $L$ then $[L_v\colon K_v]=3$.  Take $P_v= [x_0\colon y_0\colon z_0\colon w_0]\in X(K_v)$.  Via scaling, assume that $x_0,\ y_0,\ z_0$ and $w_0$ are integral and at least one has valuation 0. Since $\prod_{i=0}^2(x_0+\phi_i z_0+\psi_i w_0)$ is a norm from $L$ to $K$, $y_0=0$ would imply $x_0=z_0=w_0=0$, which is not possible.  Thus $y_0\ne 0$.  In particular, $f(P_v)=\frac{x_0+\theta y_0}{y_0}$ is defined for all $P_v\in X(K_v)$.  

  For simplicity, set $\val=\val_v$ and $N=\prod_{i=0}^2(x_0+\phi_i z_0+\psi_i w_0)$. If $\val(N)=0$, then $\val(y_0)=\val(x_0+\theta y_0)=0$.  Hence $\inv(\calA_K(P_v))=0$.
  On the other hand, suppose $\val(N)>0$.  Here the restriction of $N$ modulo $v$ is a norm on the residue class field of $L_v$ to that of $K_v$.  Thus $N$ having positive valuation implies that the restriction $\overline{N}=0$.  Hence $x_0,z_0,w_0\cong 0\pmod v$ so $\val(y_0)=0$. In fact, $3\mid \val(N)$.  Thus, $\val(d)+\val(x_0+\theta y_0)+\val(x_0+\thetabar y_0)\equiv 0\mod 3$.  However, $\val(x_0+\theta y_0)=0$ or $\val(x_0+\thetabar y_0)=0$, since $v$ does not divide both $\theta$ and $\thetabar$.  In particular, $\val(x_0+\theta y_0)\equiv 0 \pmod 3$ and 
  \begin{equation*}
  	\displaystyle\inv_v(\calA_K(P_v))=0.\qedhere
  \end{equation*}
\end{proof}

\begin{rmk}
  This result should be expected, because unramified primes of good reduction produce a trivial invariant computation.
\end{rmk}
Given the result of Theorem \ref{zeros}, in all cases where $L/K$ is unramified, we simply need to consider the places of $k$ over which $d$ has valuation that is non-zero modulo 3.  The following theorem provides a sample of the types of Brauer--Manin obstructions we may now construct for the surfaces $X$.
\begin{thm}\label{obs1}
  With the notation of section \ref{notation}.  Suppose $L/K$ is unramified.  Fix $\theta$ so that no primes of $\calO_K$ divide both $\theta$ and $\thetabar$.  Let $\fp$ be a prime of $\calO_k$ such that $\val_{\fp}(d)=n$ for some $n\not\equiv0\pmod 3$  and $\fp\mid \theta\thetabar$.  Suppose all other primes dividing $(d)$ split in $L/K$.  If $X(\Adeles_K)\neq\emptyset$ and $\fp=\cp_1\cp_2$ in $\calO_K$, each of which is inert in $\calO_L$, then $\sum_v \inv_v(\calA_K(P))\neq 0$.
\end{thm}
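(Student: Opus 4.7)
The plan is to reduce the sum $\sum_v \inv_v(\calA_K(P_v))$ over places $v$ of $K$ to contributions from only $\cp_1$ and $\cp_2$, and then carry out a direct local invariant computation at these two places using the fact that $L_{\calP_i}/K_{\cp_i}$ is unramified cyclic of degree $3$.

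First I would apply Theorem \ref{zeros} to dispose of most places: archimedean places and finite places $v$ of $K$ unramified in $L/K$ with $\val_v(d)\equiv 0\pmod 3$ (and not dividing both $\theta$ and $\thetabar$, which is automatic by hypothesis) contribute zero. For primes of $K$ above primes of $k$ dividing $(d)$ other than $\fp$, the hypothesis that they split in $L/K$ makes the local cyclic algebra $(L_w/K_v,f(P))$ trivial, hence the invariant vanishes. Only $\cp_1$ and $\cp_2$ remain. Since no prime of $\calO_K$ divides both $\theta$ and $\thetabar$ and these two conjugate primes divide $\theta\thetabar$, I may assume $\cp_1\mid\theta$ and $\cp_2\mid\thetabar$.

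For the local computation at $\cp_1$, I would take any $P=[x:y:z:w]\in X(K_{\cp_1})$ scaled to have integral coordinates with minimum valuation $0$. Since $\prod_{i=0}^{2}(x+\phi_i z+\psi_i w)$ is a norm from the unramified extension $L_{\calP_1}/K_{\cp_1}$, its $\cp_1$-valuation is a multiple of $3$; then the residue field argument from the proof of Theorem \ref{zeros} forces $\val_{\cp_1}(y)=0$ and $\val_{\cp_1}(x),\val_{\cp_1}(z),\val_{\cp_1}(w)\geq 1$. Using $\val_{\cp_1}(\theta)\geq 1$ and $\val_{\cp_1}(\thetabar)=0$, this yields $\val_{\cp_1}(x+\thetabar y)=0$ and $\val_{\cp_1}(x+\theta y)\geq 1$. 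Equating valuations on the two sides of the defining equation modulo $3$ gives $n+\val_{\cp_1}(x+\theta y)\equiv 0\pmod 3$, so $\val_{\cp_1}(f(P))\equiv -n\pmod 3$.

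The symmetric analysis at $\cp_2$, where $\theta$ is now the unit and $\thetabar$ has positive valuation, gives $\val_{\cp_2}(x+\theta y)=0$ and hence $\val_{\cp_2}(f(P))=0$, so $\inv_{\cp_2}(\calA_K(P))=0$. Applying the local invariant formula $\inv_v=j\val_v(f(P))/3\pmod 1$ at unramified primes (where $j\in\{1,2\}$ records the chosen generator of $\Gal(L/K)$ as a power of the local Frobenius), I find $\inv_{\cp_1}(\calA_K(P))\equiv -nj_1/3\pmod 1$, which is nonzero because $n\not\equiv 0\pmod 3$ and $j_1$ is a unit modulo $3$. Summing gives $\sum_v\inv_v\neq 0$. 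The main delicate step is the residue field argument forcing $\val_{\cp_1}(y)=0$, which requires $1,\bar\phi_0,\bar\psi_0$ to be linearly independent in the residue field of $\calP_1$; this is inherited from the setup of Theorem \ref{zeros}.
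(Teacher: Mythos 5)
Your proposal is correct and follows essentially the same route as the paper: reduce to $\cp_1$ and $\cp_2$ via Theorem \ref{zeros} and the splitting hypothesis, then compare valuations of the two sides of the defining equation at each $\cp_i$ to show $\inv_{\cp_2}=0$ while $\val_{\cp_1}(x+\theta y)\equiv -n\pmod 3$, forcing $\inv_{\cp_1}\in\{1/3,2/3\}$. Your added remark flagging the residue-field linear-independence of $1,\bar\phi_0,\bar\psi_0$ as the delicate point in concluding $\val_{\cp_1}(y)=0$ is a fair observation about a step the paper leaves implicit, but it does not change the argument.
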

\begin{proof}
  From the statement and proof of Theorem \ref{zeros}, we need only consider the primes $\cp_1$ and $\cp_2$ of $\calO_K$ that lie above $\fp$.  Via our assumption that no primes divide both $\theta$ and $\thetabar$, we can assume that $\cp_1\mid\theta$ and $\cp_2\nmid\theta$.  Take $\val_i=\val_{\cp_i}$ to be the respective valuation maps.  As $\val_i(\RHS)\ >\ 0$, we must be in the case that $\val_i(x_0),\ \val_i(w_0),\ \val_i(z_0)\ >\ 0$ and $\val_i(y_0)=0$.  Then $\val_2(x+\theta y)=0$, so $\inv_{\cp_2}(\calA(P))=0$.  On the other hand $\val_1(x+\theta y)= \val_1(\RHS)-\val_1(d) \equiv -\val_1(d)\mod 3$.  In particular $\inv_{\cp_1}(\calA(P))=1/3$ or $2/3$.  Thus \[\sum\inv_v(\calA(P))=\inv_{\cp_1}(\calA(P))\neq 0.\qedhere\]
\end{proof}
This theorem provides a jumping off point for similar results.  One may consider the case where more places divide $d$, and examples of most forms can be computed immediately.
\begin{proof}[Proof of \ref{thm:HasObstruction}.]
	Lemma \ref{adeles} guarantees that $X(\Adeles_k)\neq\emptyset$.  Then Theorem \ref{zeros} implies that the only possible non-zero summand in the invariant map is that corresponding to $p$ while Theorem \ref{obs1} implies that this single invariant map is non-zero. Thus $\Phi_{\calA}(P)\neq0$ for every $P\in X(\Adeles_k)$ and $X$ has a Brauer--Manin obstruction to the Hasse Principle.
\end{proof}

\section{Examples}\label{examples}
Examples that fit the situation of this Theorem \ref{obs1} are easy to come by.  Given any $L/K$ unramified we can find many such $\theta$.  Then it is a quick check via Hensel's Lemma and the Weil Conjectures to show that there is an ad\'elic point.  In fact the original example of BSD fits this case.
\begin{ex}
  Suppose $\theta' = \frac{1}{2}(1+\sqrt{-23})$ and $\phi_i$ so that $\phi_i^3=\phi_i+1$ and $\psi_i=\phi_i^2$. Define $X_{BSD}$ by 		\[2\prod_{i=0}^2(x+\phi_iz+\phi_i^2 w)=(x-y)(x+\theta' y)(x+\thetabar' y).\]
  Via the isomorphisms above, we have the isomorphic $X$ given by 
  \[\prod_{i=0}^2(x+\phi_i z+\psi_i^2 w)=32y(x+\theta y)(x+\thetabar y),\]
  where $\theta = -\theta '-6$.

  We find that $X$ has ad\'elic points but no rational points.
  Moreover, $X$ has a Brauer--Manin obstruction to rational points as described in Theorem \ref{obs1}.
\end{ex}

There are few published examples where the invariant map has two or more non-zero summands.  Given the theorems above, examples of this can be found quickly.
\begin{ex}
  Suppose the $\phi_i$ satisfy $\phi_i^3+\phi_i+1=0$ and $\theta,\ \thetabar$ are the roots of $T^2-4T+35$.

  Then 
  \[X\colon \prod_{i=0}^2 (x+\phi_iz+\psi_iw)\ =\ 5^2\cdot 7y(x+\theta y)(x+\thetabar y),\]
  has a Brauer--Manin obstruction to the Hasse Principle with the invariant map being \[1/3\ +\ 1/3\hskip .2cm \textup{ or }\hskip .2cm 2/3\ +\ 2/3,\] depending on the choice of algebra $\calA$.
\end{ex}

\bibliography{cspaper}

\begin{thebibliography}{10}

\bibitem{BSD}
B.J. Birch and P.~Swinnerton-Dyer.
\newblock The {H}asse problem for rational surfaces.
\newblock {\em J. reine angew. Math}, 274(275):164--174, 1975.

\bibitem{CG}
J.W.S. Cassels and M.J.T. Guy.
\newblock On the {H}asse principle for cubic surfaces.
\newblock {\em Mathematika}, 13(02):111--120, 1966.

\bibitem{ctconj}
J.-L. Colliot-Th\'el\`ene.
\newblock Points rationnels sur les fibrations.
\newblock In K.~J. B\"or\"oczky, J.~Koll\'ar, and T.~Szamuely, editors, {\em
  Higher {D}imensional {V}arieties and {R}ational {P}oints ({B}udapest 2001)},
  volume~12 of {\em Bolyai Society Mathematical Series}, pages 171--221.
  Springer-Verlag, 2003.

\bibitem{ctks}
J.-L. Colliot-Th{\'e}l{\`e}ne, D.~Kanevsky, and J.-J. Sansuc.
\newblock {\em Arithm{\'e}tique des surfaces cubiques diagonales}, volume 1290
  of {\em Lecture Notes in Math.}
\newblock Springer, 1987.

\bibitem{cts}
J.-L. Colliot-Th\'el\`ene and J.-J. Sansuc.
\newblock La descente sur les vari\'et\'es rationnelles.
\newblock {\em Journ\'ees de G\'eom\'etrie Alg\'ebrique d'Angers, Juillet
  1979/Algebraic Geometry, Angers, 1979 ed.}, pages 223--237, 1980.

\bibitem{corn}
P.~Corn.
\newblock {\em Del {P}ezzo surfaces and the {B}rauer--{M}anin obstruction}.
\newblock PhD thesis, Harvard University, 2005.

\bibitem{EJ12}
A.S. Elsenhans and J.~Jahnel.
\newblock On the order three {B}rauer classes for cubic surfaces.
\newblock {\em Open Mathematics}, 10(3):903--926, 2012.

\bibitem{EJ15-2}
A.S. Elsenhans and J.~Jahnel.
\newblock Cubic surfaces violating the {H}asse principle are {Z}ariski dense in
  the moduli scheme.
\newblock {\em Advances in Mathematics}, 280:360--378, 2015.

\bibitem{harpaz-witt}
Y.~Harpaz and O.~Wittenberg.
\newblock On the fibration method for zero-cycles and rational points.
\newblock {\em Annals of Mathematics}, 183(1):229--295, 2016.

\bibitem{jahnel}
J.~Jahnel.
\newblock {\em Brauer groups, {T}amagawa measures, and rational points on
  algebraic varieties}, volume 198.
\newblock American Mathematical Soc., 2014.

\bibitem{manin1}
Y.I. Manin.
\newblock Le groupe de {B}rauer-{G}rothendieck en g{\'e}om{\'e}trie
  diophantienne.
\newblock In {\em Actes du Congres International des Math{\'e}maticiens (Nice,
  1970)}, volume~1, pages 401--411. World Scientific, 1971.

\bibitem{manin2}
Y.I. Manin.
\newblock {\em {C}ubic forms: algebra, geometry, arithmetic}, volume~4.
\newblock North-Holland Publishing Co., Amsterdam, 1974.

\bibitem{Nis55}
H.~Nishimura.
\newblock Some remarks on rational points.
\newblock {\em Mem. Coll. Sci. Univ. Kyoto, Ser. A: Math.}, 29(2):189--192,
  1955.

\bibitem{ser}
J.P. Serre.
\newblock {\em Local fields}, volume~67.
\newblock Springer-Verlag, 1979.

\bibitem{sd93}
H.P.F. Swinnerton-Dyer.
\newblock The {B}rauer group of cubic surfaces.
\newblock In {\em Mathematical Proceedings of the Cambridge Philosophical
  Society}, volume 113-03, pages 449--460. Cambridge Univ Press, 1993.

\bibitem{sd99}
H.P.F. Swinnerton-Dyer.
\newblock Brauer--{M}anin obstructions on some del {P}ezzo surfaces.
\newblock In {\em Mathematical Proceedings of the Cambridge Philosophical
  Society}, volume 125-02, pages 193--198. Cambridge Univ Press, 1999.

\end{thebibliography}
\bibliographystyle{plain}
\end{document}